\footnotesize \begin{small}
{\normalsize \begin{large}

\end{large}}
\newtheorem{theorem}{Theorem}[section]
\newtheorem{lemma}[theorem]{Lemma}
\newtheorem{defn}[theorem]{Definition}
\newtheorem{eg}[theorem]{Example}
\newtheorem{claim}{Claim}
\numberwithin{equation}{section}
\title[Asymptotic of Number of Similarity Classes]{Asymptotic of Number of Similarity Classes of Commuting Tuples}
\author{Uday Bhaskar Sharma}
\address{The Institute of Mathematical Sciences, Chennai.}
\email{udaybs@imsc.res.in}
\date{\today}
\newcommand{\F}{\mathbb{F}}
\newcommand{\lbd}{\lambda}
\newcommand{\sbteq}{\subseteq}
\newcommand{\sptq}{\supseteq}
\newcommand{\sptnq}{\supsetneq}
\newcommand{\Max}{\mathrm{max}}
\newcommand{\Mnq}{M_n(\F_q)}
\newcommand{\Gnq}{GL_n(\F_q)}
\subjclass[2010]{05A16}
\keywords{Matrices over finite fields, Asymptotic, Similarity classes, Commuting tuples of matrices}
\begin{document}

\begin{abstract}
Let $c(n, k, q)$ be the number of simultaneous similarity classes of $k$-tuples of commuting $n\times n$ matrices over a finite field of order $q$. We show that, for a fixed $n$ and $q$, $c(n,k, q)$ is asymptotically $q^{m(n)k}$ (upto some constant factor), as a function of $k$, where $m(n) = [n^2/4] + 1$ is the maximal dimension of a commutative subalgebra of the algebra of $n\times n$ matrices over the finite field.
\end{abstract}
\maketitle
\section{Introduction}
Let $\F_q$ be a finite field of order $q$, $n$ be a positive integer, $\Mnq$ be the algebra of $n \times n$ matrices over $\F_q$, and $\Gnq$, the group of invertible $n\times n$ matrices. Then, by the theory of the rational canonical form, the number of similarity classes in $\Mnq$ is given by $$c(n,1,q) = \sum_{\lbd \vdash n} q^{\lbd_1},$$ where $\lbd$ varies over partitions of $n$, and each $\lbd$ is of the form: $$\lbd = (\lbd_1\geq\lbd_2\geq \cdots).$$
It can clearly be seen that, keeping $n$ fixed, $c(n,k,q)$ as a function of $q$ is asymptotically $q^n$ upto multiplication by some constant factor. If we keep $q$ fixed and look at $c(n,1,q)$ as a function of $n$, then also, it is asymptotically $q^n$ upto multiplication by a constant. This is a non-trivial asymptotic result, which Stong \cite{Stong} proved in 1988. In 1995, Neumann and Praeger \cite{NP1} looked at the probability of an $n\times n$ matrix over $\F_q$ being non-cyclic and found that, for a fixed $q$, the probability of a $n\times n$ matrix over $\F_q$ being non-cyclic, is asymptotically $q^{-3}$ as a function of $n$. They also looked at non-separable matrices, and proved that the probability of a matrix in $M_n(\F_q)$ being non-separable is asymptotically $q^{-1}$, upto multiplication by a constant.  In 1998, Girth \cite{Girth} worked on certain probabilities for $n \times n$ upper triangular matrices and compared their asymptotic behaviour with that of corresponding probabilities for arbitrary $n\times n$ matrices over $\F_q$. He also did these comparisons of asymptotic behaviours as $q$ goes to $\infty$, keeping $n$ fixed. The works mentioned above focus mainly on counting in $\Mnq$ and finding the asymptotic behaviours as $n$ goes to $\infty$. \\

In this paper, we shall consider for any positive integer $k$, the space $\Mnq^k$ of $k$-tuples of $n\times n$ matrices over $\F_q$. $\Gnq$ acts on $\Mnq^k$ by simultaneous conjugation, which is defined as follows: $$\text{For $g \in \Gnq$, and $(A_1, \ldots, A_k)\in \Mnq^k$,}$$ $$g.(A_1, \ldots, A_k) = (gA_1g^{-1},gA_2g^{-1},\ldots, gA_kg^{-1}).$$ The orbits for this action are called {\it simultaneous similarity classes}.\\

 Let $a(n,k,q)$ denote the number of simultaneous similarity classes in $\Mnq^k$. Then, by Burnside's lemma we have, $$a(n,k,q) = \frac{1}{|\Gnq|}\sum_{g \in \Gnq}|Z_{\Mnq}(g)|^k,$$ where for each $g \in \Gnq$, $Z_{\Mnq}(g)$ denotes the centralizer algebra of $g$ i.e., $$Z_{\Mnq}(g) = \{ x \in \Mnq\text{ $\mid$ $xg = gx$} \}.$$
\begin{claim}\label{Claim1}
We claim that, keeping $n$ and $q$ fixed, $a(n,k,q)$ is asymptotically $q^{n^2k}$ up to some constant factor, as $k$ goes to $\infty$.
\end{claim}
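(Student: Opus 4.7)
The plan is to apply Burnside's formula
\[
a(n,k,q) = \frac{1}{|\Gnq|}\sum_{g \in \Gnq}|Z_{\Mnq}(g)|^k
\]
and isolate the dominant contribution. Since $|Z_{\Mnq}(g)|^k$ enters as the $k$-th power of the centralizer size, the asymptotics as $k \to \infty$ are controlled by those $g$ for which $|Z_{\Mnq}(g)|$ is maximal.

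First I would observe that for any $g \in \Mnq$, the centralizer $Z_{\Mnq}(g)$ is an $\F_q$-subalgebra of $\Mnq$, so $|Z_{\Mnq}(g)| \leq q^{n^2}$, with equality if and only if $g$ is a scalar matrix $\lambda I$. Indeed, if $g$ is non-scalar then $Z_{\Mnq}(g)$ is a proper $\F_q$-subspace of $\Mnq$, hence of dimension at most $n^2 - 1$ and cardinality at most $q^{n^2-1}$.

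Next I would split the Burnside sum into the contribution from the $q - 1$ non-zero scalar matrices in $\Gnq$ (each contributing $q^{n^2 k}$, since $Z_{\Mnq}(\lambda I) = \Mnq$) and the contribution from the remaining non-scalar elements of $\Gnq$, which is crudely bounded above by $|\Gnq|\cdot q^{(n^2-1)k}$. This yields the two-sided estimate
\[
(q-1)\, q^{n^2 k} \;\leq\; \sum_{g\in\Gnq} |Z_{\Mnq}(g)|^k \;\leq\; (q-1)\, q^{n^2 k} + |\Gnq|\cdot q^{(n^2-1)k}.
\]
Dividing by $|\Gnq|$ and noting that the second upper-bound term is smaller than the first by a factor of order $q^{-k}$, I expect to conclude
\[
\lim_{k \to \infty} \frac{a(n,k,q)}{q^{n^2 k}} = \frac{q-1}{|\Gnq|},
\]
a strictly positive constant depending only on $n$ and $q$, which is exactly the assertion of the claim.

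There is no serious obstacle here: the only substantive input is the elementary centralizer bound for non-scalar matrices, and the rest is a one-line domination estimate once the scalar stratum is peeled off. The reason this argument will \emph{not} generalize to the commuting case (where the relevant exponent drops from $n^2$ to $m(n) = [n^2/4]+1$) is precisely that scalar matrices, while maximising centralizer size, need not maximise the number of commuting tuples built around them; the harder work in the paper must therefore lie in replacing the trivial centralizer stratification with a finer analysis of commutative subalgebras.
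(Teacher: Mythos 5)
Your proposal is correct and follows essentially the same route as the paper: apply Burnside's lemma, isolate the $q-1$ scalar matrices in $\Gnq$ (giving the lower bound $\frac{q-1}{|\Gnq|}q^{n^2k}$), and dominate the contribution of the non-scalar elements, whose centralizers are proper subalgebras of $\Mnq$. The only difference is that you use the elementary subspace bound $|Z_{\Mnq}(g)| \leq q^{n^2-1}$ for non-scalar $g$, where the paper invokes Agore's bound $n^2-n+1$ on the dimension of a maximal proper subalgebra; your cruder estimate is still amply sufficient and even yields the sharper statement $a(n,k,q)/q^{n^2k} \to (q-1)/|\Gnq|$.
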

\begin{proof}
We need to show that there exist positive constants, $m_1$ and $m_2$ (constant with respect to $k$), such that: $m_1q^{n^2k}\leq a(n,k,q) \leq m_2q^{n^2k}$. So, in the Burnside lemma expansion of $a(n,k,q)$, just consider all those $g$ that are scalar matrices. Then, we have $Z_{\Mnq}(g) = \Mnq$. So, taking $m_1$ to be $$m_1 = \frac{q-1}{|\Gnq|},$$ we have $m_1q^{n^2k}\leq a(n,k,q).$\\

Next, if $g$ is a non-scalar matrix, then $Z_{\Mnq}(g) \subsetneq \Mnq$. We know (see Agore \cite{ALAg}), that the maximal dimension of a proper subalgebra of $\Mnq$ is, $n^2 - n + 1$.\\

 So we have \begin{equation*}\begin{aligned} a(n,k,q) &= \frac{1}{|\Gnq|}\sum_{g \in \Gnq}|Z_{\Mnq}(g)|^k\\
&= \frac{1}{|\Gnq|}(q-1)q^{n^2k} + \sum_{ \substack{g \in \Gnq\\g \notin \F_q.I_n} }|Z_{\Mnq}(g)|^k\\
&\leq \frac{1}{|\Gnq|}(q-1)q^{n^2k} + \sum_{ \substack{g \in \Gnq\\g \notin \F_q.I_n}} q^{(n^2-n+1)k} \\
&= \frac{1}{|\Gnq|}(q-1)q^{n^2}k \left( 1 + (|\Gnq| - q + 1)q^{-(n-1)k}\right).
\end{aligned}
\end{equation*}
From this, we get $m_2$ such that $a(n,k,q) \leq m_2q^{n^2k}$. Thus the claim is proved.\end{proof}

Now, denote by $\Mnq^{(k)}$, the set of $k$-tuples of commuting matrices from $\Mnq$, i.e., the set,
\begin{displaymath}
\Mnq^{(k)} = \{ (A_1, \ldots, A_k) \in \Mnq^k\text{ $\mid$ $A_iA_j = A_jA_i$ for $i \neq j$}\}.
\end{displaymath}

Let $c(n,k,q)$ denote the number of simultaneous similarity classes in $\Mnq^{(k)}$ under the simultaneous conjugation by $\Gnq$ on it. The aim of the paper is to find for a fixed $n$ and $q$, an asymptotic for $c(n,k,q)$ as a function of $k$. The problem here is that, the technique used in the proof of Claim~\ref{Claim1} fails in this case because the matrices, $A_1,..., A_k$, are no longer independently chosen.\\

In \cite{UBS}, $c(n,k,q)$ was calculated for $n = 2, 3, 4$. The leading terms of some of those values are shown in Table~\ref{tab1}. \\
\begin{table}[h!]
\begin{equation*}\def\arraystretch{1.2}
\begin{array}{|c|c|c|c |}
\hline
k & c(2,k,q) &  c(3,k,q) & c(4,k,q)\\ [0.3em] \hline
1 & q^2 + q & q^{3} + q^{2} + q & q^4+q^3 + 2q^2 + 2 \\
2 & q^{4} + q^{3} + q^{2}  & q^{6} + q^{5} + 2 q^{4} + \cdots & q^{8} + q^{7} + \cdots \\
\vdots & \vdots  & \vdots & \vdots\\
7 & q^{14} +  q^{13} + \cdots & q^{21} + q^{20} + \cdots & 2q^{28} + q^{27} + \cdots\\
8 & q^{16} + q^{15} +\cdots & q^{24} + q^{23} + \cdots & q^{33} + q^{32} + \cdots\\
\vdots & \vdots & \vdots & \vdots\\
20& q^{40} + q^{39} + \cdots & q^{60} + q^{59} + \cdots & q^{93} + q^{92} + \cdots  \\
21& q^{42} + q^{41} + \cdots & q^{63} + q^{62} + \cdots & q^{98} + q^{97} + \cdots\\
\vdots & \vdots & \vdots & \vdots\\ \hline
\end{array}
\end{equation*}
\caption{Leading terms of $c(n,k,q)$ for $n= 2,3,4$}
\label{tab1}
\end{table}

From Table~\ref{tab1}, we see that $c(2,k,q)$ is asymptotically $q^{2k}$. $c(3,k,q)$ is asymptotically $q^{3k}$ and $c(4,k,q)$ is asymptotically $q^{5k-7} = q^{-7}q^{5k}$. In the case of $n = 4$, we see that $c(4,k,q)$ is asymptotically $q^{5k}$ (and not $q^{4k}$, as we would expect), up to a constant factor which is $q^{-7}$.\\

The number 5 is the maximal dimension for any commutative subalgebra of $M_4(\F_q)$. In fact, Jacobson \cite{NJac} showed that, for any positive integer $n$, the maximal dimension of any commutative subalgebra of $\Mnq$ is $$m(n) = \left[\frac{n^2}{4}\right] + 1.$$

Coming back to $n = 2,3,4$, we see that $m(2) = 2$, $m(3) = 3$ and $m(4) = 5$. So for $n \in\{2,3,4\}$, $c(n,k,q)$ is asymptotically $q^{m(n)k}$ up to some constant factor. We claim this is true for any $n$. Thus we have the main theorem of this paper:
\begin{theorem}\label{main}
For a fixed positive integer $n$ and prime power $q$, $c(n,k,q)$ as a function of $k$, is asymptotic to $q^{m(n)k}$ up to some constant factor.
\end{theorem}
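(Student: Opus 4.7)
The plan is to sandwich $c(n,k,q)$ between two positive constant multiples of $q^{m(n)k}$, using only Jacobson's theorem on the maximum dimension $m(n)$ of a commutative $\F_q$-subalgebra of $\Mnq$ (already cited in the excerpt).

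For the lower bound, I would fix once and for all a commutative $\F_q$-subalgebra $\Cal{A} \subseteq \Mnq$ of dimension exactly $m(n)$, whose existence is guaranteed by Jacobson. Every $k$-tuple in $\Cal{A}^k$ is automatically commuting, so $\Cal{A}^k \subseteq \Mnq^{(k)}$ and $|\Cal{A}|^k = q^{m(n)k}$. Since every $\Gnq$-orbit on $\Mnq^{(k)}$ has size at most $|\Gnq|$, the orbits meeting $\Cal{A}^k$ already yield at least $q^{m(n)k}/|\Gnq|$ distinct similarity classes, so $c(n,k,q) \geq q^{m(n)k}/|\Gnq|$.

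For the upper bound, the key observation is that every commuting $k$-tuple $(A_1,\ldots,A_k)$ lies in the commutative $\F_q$-subalgebra of $\Mnq$ that it generates, which has $\F_q$-dimension at most $m(n)$ by Jacobson. Because $\Mnq$ is finite, only finitely many commutative $\F_q$-subalgebras $\Cal{B} \subseteq \Mnq$ exist; let $C(n,q)$ denote their number, which depends only on $n$ and $q$. Then
\begin{equation*}
c(n,k,q) \;\leq\; |\Mnq^{(k)}| \;\leq\; \sum_{\Cal{B}\text{ commutative}}|\Cal{B}|^k \;\leq\; C(n,q)\, q^{m(n)k},
\end{equation*}
where the first inequality holds because orbits are nonempty, the second because each commuting tuple is counted in at least one $\Cal{B}$ (for example, the one it generates), and the third because $|\Cal{B}|\leq q^{m(n)}$ for every commutative $\Cal{B} \subseteq \Mnq$.

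The main difficulty is conceptual rather than technical: one has to recognise that the correct exponent must be $m(n)$ rather than $n^2$, which is exactly why the argument from Claim~\ref{Claim1} breaks down. In particular, substituting Agore's proper-subalgebra bound $n^2-n+1$ (used in Claim~\ref{Claim1}) in place of $m(n)$ would give a strictly weaker and, for $n \geq 3$, non-sharp exponent. The constants $1/|\Gnq|$ and $C(n,q)$ produced here are extremely crude compared with those visible in Table~\ref{tab1} (e.g.\ $q^{-7}$ for $n=4$), but the theorem only claims matching exponents up to a constant factor, so no finer analysis is needed.
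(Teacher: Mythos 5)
Your proof is correct, but it takes a genuinely different and more elementary route than the paper. The paper never argues directly with sets of tuples: it expands $c(n,k,q)$ recursively over non-increasing chains of centralizer subalgebras $Z_1 \sptq \cdots \sptq Z_k$ (its equation for $c(n,k,q)$ in terms of the branching numbers $c_{Z_iZ_{i+1}}$), gets the lower bound from an explicit $([n/2]+1)$-tuple of commuting matrices whose \emph{common centralizer} is a commutative algebra of dimension $m(n)$, and gets the upper bound by a bookkeeping argument over the positions of strict descents in the chain, using the lemma that centres strictly grow along proper descents (so every step before the last descent contributes at most $q^{m(n)-1}$) together with convergence of $\sum_r r^j q^{-r}$. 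You instead sandwich $c(n,k,q)$ by raw counting: the lower bound $c(n,k,q)\geq q^{m(n)k}/|\Gnq|$ comes from the $q^{m(n)k}$ tuples in $\Cal{A}^k$ for one maximal commutative subalgebra $\Cal{A}$, each orbit absorbing at most $|\Gnq|$ of them (note you only need Jacobson's bound to be \emph{attained}, which the paper exhibits explicitly and your appeal to Jacobson also covers); the upper bound comes from $c(n,k,q)\leq|\Mnq^{(k)}|\leq\sum_{\Cal{B}}|\Cal{B}|^k\leq C(n,q)\,q^{m(n)k}$, since every commuting tuple lies in the (commutative, dimension $\leq m(n)$) subalgebra it generates and there are finitely many subalgebras. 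Both steps are sound — the passage from orbit count to element count costs at most a factor $|\Gnq|$, which is constant in $k$, so nothing is lost at the level of "up to a constant factor" — and your argument has the bonus that it proves the paper's Section~3 result on $C(n,k,q)=|\Mnq^{(k)}|$ essentially for free. What the paper's heavier chain decomposition buys is structural information: the recursion through centralizer algebras and the branching coefficients $c_{ZZ'}$ track how the classes are built up and could in principle be pushed to finer asymptotics (the constants visible in Table~\ref{tab1}), whereas your constants $1/|\Gnq|$ and $C(n,q)$ are, as you say, deliberately crude.
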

\subsection{Outline of the Paper} In Section~\ref{S2}, we will prove the main theorem (Theorem~\ref{main}). In Section~\ref{S3}, we will find out the asymptotic of counting the total number of $k$-tuples of commuting matrices over $\F_q$ i.e., the cardinality of $\Mnq^{(k)}$.


\section{Proof of Theorem~\ref{main}}\label{S2}
To prove Theorem~\ref{main}, it suffices to prove the existence of positive numbers, $C_1$ and $C_2$, such that: $$C_1q^{m(n)k} \leq c(n,k,q) \leq C_2q^{m(n)k}$$ for large $k$. Before we go ahead, we will need to unravel $c(n,k,q)$.\\

We first define the following:
\begin{defn} Let $Z \sbteq \Mnq$ be a subalgebra, and $Z^*$ be the group of units of $Z$. For positive integer $k$, let $c(Z,k,q)$ denote the number of simultaneous similarity classes of $k$-tuples of commuting matrices in $Z$, under the conjugation action by $Z^*$.\\

For $k = 0$ and any subalgebra $Z\sbteq \Mnq$, $c(Z, 0, q) = 1$.\end{defn}
We claim:
\begin{equation}\label{Eqn1}
c(n,k,q) = \sum_{Z \sbteq \Mnq} c_Z c(Z,k-1,q),
\end{equation}
where $Z$ runs over subalgebras of $\Mnq$, $c_Z$ is the number of similarity classes in $\Mnq$, whose centralizer algebra is conjugate to $Z$. \\

Let $(A_1,\ldots,A_k) \in \Mnq^{(k)}$. Let $Z = Z_{\Mnq}(A_1)$. Then it is clear that $(A_2, \ldots, A_k) \in Z^{(k-1)}$. The map, $$(A_1, \ldots, A_k) \mapsto (A_2, \ldots, A_k),$$ induces a bijection between the set of simultaneous similarity classes in $\Mnq^{(k)}$, which have an element whose first coordinate is $A_1$, and the orbits for the simultaneous conjugation action of $Z^*$ on $Z^{(k-1)}$. Hence we get the identity~(\ref{Eqn1}).\\

Now, in identity~(\ref{Eqn1}), for each $Z$, we can expand $c(Z, k-1, q)$ (when $K \geq 2$) to get
$$c(Z,k-1,q) = \sum_{Z' \sbteq Z}c_{ZZ'}c(Z',k-2,q),$$
where $c_{ZZ'}$ is the number of orbits of matrices in $Z$ for the action of $Z^*$ on it by conjugation, whose centralizer algebra under this conjugation action is conjugate to $Z'$. 

Proceeding this way, we get the following expansion for $c(n,k,q)$:
 \begin{equation}\label{Eqn2}
 c(n,k,q) = \sum_{Z_1 \sptq \cdots \sptq Z_k}c_{Z_1}c_{Z_1Z_2}\cdots c_{Z_{k-1}Z_k},
 \end{equation}
where, for $1 \leq i \leq k-1$, $Z_i$ is the common centralizer of some $i$-tuple of commuting matrices $(A_1,\ldots,A_i)$, i.e., $$Z_i = \bigcap_{j=1}^i Z_{\Mnq}(A_j), $$ and $c_{Z_iZ_{i+1}}$ denotes the number of orbits of matrices in $Z_i$ for the conjugation action of $Z_i^*$, whose centralizer algebra in $Z_i$, is conjugate to $Z_{i+1}$. For $Z_{i+1}\sbteq Z_i$, we say that $Z_{i+1}$ is a \textbf{branch} of $Z_i$, if $c_{Z_iZ_{i+1}} > 0$.\\

Here are some observations about these non-increasing sequences of subalgebras which come up in the expansion of $c(n,k,q)$. We shall state them as a lemma:
\begin{lemma}\label{Obs}
Given a non-increasing sequence of centralizer subalgebras which occurs in equation~(\ref{Eqn2}), say $$Z_1\sptq \cdots \sptq Z_k,$$ we have the following:\begin{enumerate}
\item If for some $i$, $Z_i$ is a commutative subalgebra, then $$Z_{i+1} = \cdots = Z_k = Z_i$$ and for each $j~(i \leq j \leq k-1)$, $$c_{Z_jZ_{j+1}} = q^{\dim(Z_i)}.$$
\item If $Z_i$ is not necessarily commutative, but if $Z_{i+1} = Z_i$, then $$c_{Z_iZ_{i+1}} = q^{\dim(Z(Z_i))},$$ where $Z(Z_i)$ is the centre of $Z_i$.
\end{enumerate}
\end{lemma}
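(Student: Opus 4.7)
The plan is to derive both parts directly from the definition of $c_{Z_iZ_{i+1}}$ by identifying exactly which elements $A\in Z_i$ produce a centralizer (inside $Z_i$) conjugate to the specified $Z_{i+1}$, and then describing the orbits of those $A$ under $Z_i^*$. The key simplification is that when one subalgebra occurs twice in a row, the ``conjugate to'' phrase collapses: for any $g\in Z_i^*$ we have $gZ_ig^{-1}=Z_i$, so the only subalgebra of $Z_i$ conjugate (under $Z_i^*$) to $Z_i$ is $Z_i$ itself.

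I would prove part (2) first, since part (1) follows from it. Assume $Z_{i+1}=Z_i$. By the remark above, an element $A\in Z_i$ contributes to $c_{Z_iZ_{i+1}}$ if and only if its centralizer in $Z_i$ equals $Z_i$, i.e., $A$ commutes with every element of $Z_i$. This is precisely the condition $A\in Z(Z_i)$. Moreover, for any $g\in Z_i^*$ and $A\in Z(Z_i)$ we have $gAg^{-1}=A$, so the conjugation action of $Z_i^*$ fixes every element of $Z(Z_i)$ pointwise. Therefore the relevant orbits are exactly the singletons inside $Z(Z_i)$, and
\[
c_{Z_iZ_{i+1}}=|Z(Z_i)|=q^{\dim Z(Z_i)}.
\]

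For part (1), assume $Z_i$ is commutative. Then for every $A\in Z_i$, the centralizer of $A$ in $Z_i$ is all of $Z_i$. Thus the only branch $Z_{i+1}$ with $c_{Z_iZ_{i+1}}>0$ is $Z_{i+1}=Z_i$, which forces $Z_{i+1}=Z_i$. Since $Z_i$ is commutative, $Z(Z_i)=Z_i$, and part (2) gives $c_{Z_iZ_{i+1}}=q^{\dim Z_i}$. But $Z_{i+1}=Z_i$ is again commutative, so the same reasoning applied iteratively yields $Z_j=Z_i$ and $c_{Z_jZ_{j+1}}=q^{\dim Z_i}$ for every $j$ with $i\le j\le k-1$.

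There is no serious obstacle here: the whole lemma is essentially a careful unpacking of definitions plus the elementary facts that $A$ has full centralizer in $Z_i$ exactly when $A\in Z(Z_i)$, and central elements are fixed by inner conjugation. The only place to be slightly careful is the ``conjugate to'' clause in the definition of $c_{Z_iZ_{i+1}}$, which one must observe is vacuous in the diagonal case $Z_{i+1}=Z_i$ because $Z_i$ is stable under conjugation by its own units.
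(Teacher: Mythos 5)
Your proof is correct and follows essentially the same route as the paper's: both parts are direct unpackings of the definition of $c_{Z_iZ_{i+1}}$, using that an element of $Z_i$ has full centralizer in $Z_i$ exactly when it is central. Your write-up is slightly more careful than the paper's in two small places --- making explicit that the ``conjugate to $Z_{i+1}$'' clause collapses when $Z_{i+1}=Z_i$, and that central elements form singleton orbits so counting orbits equals counting elements --- but these are refinements of the same argument, not a different approach.
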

\begin{proof}
For $i \geq 1$, let $(A_1, \ldots, A_i) \in \Mnq^{(k)}$, such that
$$Z_i = \bigcap_{j=1}^iZ_{\Mnq}(A_j).$$
\begin{enumerate}
\item Suppose, for some $i$, $Z_i$ is commutative. Then, for any element, $A_{i+1} \in Z_i$, its centralizer $Z_{Z_i}(A_{i+1})$ in $Z_i$, is $Z_i$ itself. Therefore, we have $$Z_{i+1} = \bigcap_{j=1}^{i+1}Z_{\Mnq}(A_j) = Z_{Z_i}(A_{i+1}) = Z_i,$$  and therefore $c_{Z_iZ_{i+1}} = |Z_i| = q^{\dim(Z_i)}$. Similarly, $Z_j = Z_i$ for $i+1 \leq j \leq k$. Thus, $c_{Z_jZ_{j+1}} = q^{\dim(Z_i)}\leq q^{m(n)}$ for $i \leq j \leq k-1$.

\item If $Z_i$ is not necessarily commutative but, $Z_{i+1} = Z_i$, then $c_{Z_iZ_{i+1}}$ is the number of matrices $A_{i+1}$ in $Z_i$ for which $$Z_{Z_i}(A_{i+1}) = Z_i.$$ Thus $c_{Z_iZ_{i+1}}$ is the size of the centre $Z(Z_i)$, of  $Z_i$. So $$c_{Z_iZ_{i+1}} = q^{\dim(Z(Z_i))} \leq q^{m(n)}.$$
\end{enumerate}
\end{proof}

\subsection{Finding Crude Lower and Upper bounds for $c(n,k,q)$}

The first and main thing we need to show is that there exists a tuple of commuting matrices whose common centralizer is a commutative algebra of dimension $m(n)$. Here are examples of tuples of commuting matrices whose common centralizer is a commutative subalgebra of $\Mnq$ of dimension $m(n)$.

\begin{eg}\label{Eg1}
When $n$ is even, say $n = 2l$, for some $l \geq 1$, we have $$m(n) = l^2 + 1.$$ Consider the commuting tuple, $(A_1, A_2, \ldots, A_{l+1})$, in which
$$A_1 = \begin{pmatrix} 0_l &I_l\\ 0_l & 0_l\end{pmatrix},$$
where $0_l$ is the $l\times l$ 0-block, and $I_l$ is the $l\times l$ identity matrix. For $i \geq 2$, $$A_i = \begin{pmatrix}
0_l & N_i \\ 0_l & 0_l\end{pmatrix},$$ where for $i =2, \ldots, l+1$, $$N_i = \begin{pmatrix}0_{(l-1)\times l}\\e_{i-1} \end{pmatrix} \text{ ( $0_{(l-1)\times l}$ is the $(l-1)\times l$ 0-block)}$$ and $e_{i-1}$ is the $1\times l$ row matrix
$$\begin{pmatrix}0 \cdots \underset{\underset{(i-1)th \text{ place}}\downarrow}{1} \cdots 0\end{pmatrix}.$$
 Its common centralizer algebra is $$Z = \left\{a_0I_n + \begin{pmatrix} 0_l&B\\ 0_l& 0_l\end{pmatrix} \text{ : $a_0 \in \F_q$ and $B \in M_l(\F_q)$ } \right\}.$$
It is commutative and is of dimension $l^2 +1$.
\end{eg}
\begin{eg}\label{Eg2}When $n$ is odd, say $n = 2l +1$ for some $l \geq 1$, then $m(n) = l(l+1) + 1$. Consider the commuting tuple $(A_1, A_2, \ldots, A_{l+1})$ where $$A_1 = \begin{pmatrix} 0_{(l+1 )\times (l+1)} &I_l \\ 0_{l\times (l+1)} & 0_{(l+1) \times l}\end{pmatrix}, $$ and for $i=2, \ldots, l+1$, $$A_i=\begin{pmatrix} 0_{(l+1 )\times (l+1)} &N_i \\ 0_{l\times (l+1)} & 0_{l\times l}\end{pmatrix}, $$ where for each $i$, $N_i$ is a $(l+1)\times l$-matrix of the form $$\begin{pmatrix} 0_{l\times l}\\e_{i-1}\end{pmatrix},$$ where $e_{i-1}$ is as defined in Example~\ref{Eg1}. Then the common centralizer of this tuple of commuting matrices is $$\left\{ a_0I_n + \begin{pmatrix} 0_{(l+1)\times(l+1)} & B \\ 0_{l\times(l+1)} & 0_{l\times l}\end{pmatrix} \text{ : $a_0\in \F_q$ and $B \in M_{(l+1) \times l}(\F_q)$} \right\}.$$ It is commutative and is of dimension $l(l+1) + 1$, which is equal to $m(n)$.
\end{eg}

So we can find at least a $([n/2] + 1)$-tuple of commuting $n \times n$ matrices, whose common centralizer algebra is of dimension $m(n)$.

 \begin{lemma}\label{lower1} There exists $C_1 > 0 $ such that $C_1q^{m(n)k} \leq c(n,k,q)$ for large $k$.
 \end{lemma}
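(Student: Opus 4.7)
The plan is to extract, out of the huge sum in~(\ref{Eqn2}), a single chain of centralizer subalgebras whose contribution alone is already $\Omega(q^{m(n)k})$; since every term in~(\ref{Eqn2}) is a product of non-negative integers, this will be enough. The chain will be assembled from the explicit commuting tuples supplied by Examples~\ref{Eg1} and~\ref{Eg2}, which furnish (with $l = [n/2]$) a commuting $(l+1)$-tuple whose common centralizer $Z$ is a commutative subalgebra of $\Mnq$ of the maximal possible dimension $m(n)$.

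Concretely, let $(A_1,\ldots,A_{l+1})$ be the tuple from the appropriate example and set $Z_i := \bigcap_{j=1}^{i} Z_{\Mnq}(A_j)$. This yields a non-increasing chain $Z_1 \sptq Z_2 \sptq \cdots \sptq Z_{l+1} = Z$ in which each transition is witnessed by the matrix $A_{i+1}$, so each of the constants $c_{Z_1}$ and $c_{Z_i Z_{i+1}}$ (for $1 \leq i \leq l$) is a positive integer, hence at least $1$. For $k \geq l+1$, I will then extend this to a length-$k$ chain by setting $Z_j := Z$ for every $j$ with $l+1 \leq j \leq k$; this produces a legitimate summand of~(\ref{Eqn2}).

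The crucial observation is then Lemma~\ref{Obs}(1): since $Z_{l+1} = Z$ is commutative, each subsequent constant $c_{Z_j Z_{j+1}}$ with $j \geq l+1$ is \emph{forced} to equal $q^{\dim Z} = q^{m(n)}$. Hence the contribution of this single chain to $c(n,k,q)$ is bounded below by
$$c_{Z_1}\cdot\prod_{i=1}^{l} c_{Z_i Z_{i+1}} \cdot \prod_{j=l+1}^{k-1} c_{Z_j Z_{j+1}} \;\geq\; q^{m(n)(k-l-1)} \;=\; q^{-m(n)(l+1)}\cdot q^{m(n)k},$$
and the lemma follows with $C_1 := q^{-m(n)(l+1)}$ for all $k \geq l+1$.

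There is no real obstacle once Examples~\ref{Eg1}--\ref{Eg2} are in hand: the genuine work has already been absorbed into producing a commuting tuple whose common centralizer is commutative of the largest possible dimension, and Lemma~\ref{Obs}(1) then automatically delivers the $q^{m(n)}$ growth per additional coordinate. The only thing to be mildly careful about is the bookkeeping of the first $l+1$ constants, which are absorbed harmlessly into $C_1$.
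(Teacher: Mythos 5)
Your proposal is correct and follows essentially the same route as the paper: take the explicit tuple from Examples~\ref{Eg1}--\ref{Eg2}, note its common centralizer is commutative of dimension $m(n)$, extend the chain constantly and apply Lemma~\ref{Obs}(1) to pick up a factor $q^{m(n)}$ per extra coordinate, absorbing the first few constants into $C_1$. The only cosmetic difference is that you bound those initial constants below by $1$ to get an explicit $C_1 = q^{-m(n)(l+1)}$, while the paper keeps them as $c_{Z_1}c_{Z_1Z_2}\cdots c_{Z_{l_0-1}Z_{l_0}}q^{-m(n)l_0}$.
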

\begin{proof}
Let $l_0 = \left[\displaystyle\frac{n}{2}\right] + 1$.
Consider the $k$-tuple, $$(A_1, A_2, \ldots, A_{l_0}, A_{l_0+1},\ldots, A_k),$$ where the first $l_0$ matrices of the commuting tuple are as in Examples~\ref{Eg1}~or~\ref{Eg2} (depending on whether $n$ is even or odd). Here, $Z_{l_0}$ is a commutative subalgebra of dimension $m(n)$ (as described in the examples). Hence, by Lemma~\ref{Obs}, for $i = l_0+1, \ldots, k$, $Z_i = Z_{l_0}$. Then $$c(n,k,q) \geq  c_{Z_1}c_{Z_1Z_2}\cdots c_{Z_{l_0-1}Z_{l_0}}q^{m(n)(k-l_0)}.$$ Let $$C_1 = \frac{c_{Z_1}c_{Z_1Z_2}\cdots c_{Z_{l_0-1}Z_{l_0}}}{q^{m(n)l_0}}$$ then $c(n,k,q) \geq C_1q^{m(n)k}$ for all large $k$.

\end{proof}

To complete the proof of the Theorem~\ref{main}, we need the following observation (Lemma~\ref{L2}), about the non-increasing sequences of subalgebras, $Z_1 \supseteq \cdots \supseteq Z_k$, which occur in the expansion of $c(n,k,q)$ (given in equation~(\ref{Eqn2})).
\begin{lemma}\label{L2}
$Z(Z_i) \subseteq Z(Z_{i+1})$ for $i \geq 1$ and if $Z_{i+1} \subsetneq Z_i$, then $Z(Z_i) \subsetneq Z(Z_{i+1})$
\end{lemma}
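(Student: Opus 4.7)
The plan is to unwind the definitions, using the key fact from the construction of the chain $Z_1 \supseteq \cdots \supseteq Z_k$: each $Z_{i+1}$ arises as $Z_{Z_i}(A_{i+1})$ for some element $A_{i+1} \in Z_i$ (namely, $Z_{i+1} = Z_i \cap Z_{\Mnq}(A_{i+1})$, and $A_{i+1}$ commutes with $A_1, \ldots, A_i$ so lies in $Z_i$).

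For the inclusion $Z(Z_i) \subseteq Z(Z_{i+1})$, I would take any $z \in Z(Z_i)$ and verify two things. First, $z \in Z_{i+1}$: since $A_{i+1} \in Z_i$ and $z$ is central in $Z_i$, we have $zA_{i+1} = A_{i+1}z$, so $z \in Z_{Z_i}(A_{i+1}) = Z_{i+1}$. Second, $z$ commutes with every element of $Z_{i+1}$: this is immediate because $Z_{i+1} \subseteq Z_i$ and $z$ commutes with everything in $Z_i$. Hence $z \in Z(Z_{i+1})$.

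For the strict part, when $Z_{i+1} \subsetneq Z_i$, I would exhibit $A_{i+1}$ itself as an element of $Z(Z_{i+1}) \setminus Z(Z_i)$. On the one hand, $A_{i+1} \in Z_{i+1}$ (it trivially commutes with itself and lies in $Z_i$), and by the very definition of $Z_{i+1} = Z_{Z_i}(A_{i+1})$, every element of $Z_{i+1}$ commutes with $A_{i+1}$, so $A_{i+1} \in Z(Z_{i+1})$. On the other hand, if $A_{i+1}$ were in $Z(Z_i)$, then it would commute with all of $Z_i$, forcing $Z_{Z_i}(A_{i+1}) = Z_i$, i.e., $Z_{i+1} = Z_i$, contradicting the strict containment. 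Combined with the first part, this gives $Z(Z_i) \subsetneq Z(Z_{i+1})$.

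The argument is essentially a definition chase, so there is no genuine obstacle; the one subtle point to be careful about is the ``bookkeeping'' that the centralizer is taken inside $Z_i$ (so $Z(Z_{i+1})$ means the center of $Z_{i+1}$ as an algebra, not the intersection of centralizers in $\Mnq$), and that $A_{i+1}$ actually sits in $Z_{i+1}$ so it can play the role of a witness element for the strict inclusion.
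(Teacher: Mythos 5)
Your proposal is correct and follows essentially the same argument as the paper: the paper's witness $y$ with $Z_{Z_i}(y)=Z_{i+1}$ plays exactly the role of your $A_{i+1}$, both for showing $Z(Z_i)\subseteq Z_{i+1}$ (hence $\subseteq Z(Z_{i+1})$) and for exhibiting an element of $Z(Z_{i+1})\setminus Z(Z_i)$ when the containment $Z_{i+1}\subsetneq Z_i$ is strict. No gaps.
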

\begin{proof}
Let $x \in Z(Z_i)$. Then, for any $y \in Z_i$ such that $Z_{Z_i}(y) = Z_{i+1}$,  $xy = yx$ implies that $x \in Z_{i+1}$. Now, as $x \in Z(Z_i)$, $xz = zx$ for every $z \in Z_{i+1}$, which implies that, $x \in Z(Z_{i+1})$. So $Z(Z_i) \sbteq Z(Z_{i+1})$ and thus $\dim(Z(Z_{i+1})) \geq \dim(Z(Z_i))$.\\

If $Z_i \supsetneq Z_{i+1}$. Then consider any $y \in Z_i$ for which $Z_{Z_i}(y) = Z_{i+1}$. Clearly, $y \in Z(Z_{i+1})$. But, for $x \notin Z_{i+1}$, $yx \neq xy$. Hence $y \notin Z(Z_i)$. Therefore $Z(Z_i) \subsetneq Z(Z_{i+1})$. Thus $\dim(Z(Z_{i+1})) > \dim(Z(Z_i))$.  \end{proof}

Now we are in a position to get a crude upper bound for $c(n,k,q)$. Let $k > n^2$. Let us look at any summand of $c(n,k,q)$. A summand of $c(n,k,q)$ is of the form, $$c_{Z_1}c_{Z_1Z_2}\cdots c_{Z_{k-1}Z_k},$$ where $Z_1 \supseteq Z_2 \supseteq \cdots \supseteq Z_k$. Let $j$ be the number of distinct $Z_i$'s in the non-increasing sequence. As $\Mnq$ is of dimension $n^2$, there cannot be more than $n^2$ distinct $Z_i$'s in this non-increasing sequence, $Z_1 \supseteq Z_2 \supseteq \cdots \supseteq Z_k$, of subalgebras of $\Mnq$. So $1 \leq j \leq n^2$.\\

We therefore rewrite $c(n,k,q)$ as
\begin{equation}\label{Eqn3} c(n,k,q) = \sum_{j = 0}^{n^2-1} \sum_{\substack{Z_1\sptq\cdots \sptq Z_k \\ j+1~\mathrm{distinct}}}c_{Z_1}c_{Z_1Z_2}\cdots c_{Z_{k-1}Z_k}
\end{equation}
Now, for any $j:~0 \leq j \leq n^2-1$; consider a non-increasing sequence, $Z_1 \sptq \cdots \sptq Z_k$, in which $j+1$ of the $Z_i$'s are distinct. Then it has a strictly decreasing subsequence, $$Z_{i_1} \sptnq Z_{i_2} \sptnq \cdots \sptnq Z_{i_j} \sptnq Z_k.$$ So the non-increasing sequence, $Z_1 \sptq \cdots \sptq Z_k$, looks like this: \begin{equation}\label{Expn1}Z_1 = \cdots = Z_{i_1} \sptnq Z_{i_1 + 1} = \cdots = Z_{i_2} \sptnq \cdots = Z_{i_j} \sptnq Z_{i_j+1} = \cdots =Z_k.\end{equation} From Lemma~\ref{Obs}, we have: $c_{Z_1}c_{Z_1Z_2}\cdots c_{Z_{k-1}Z_k}$ is equal to $$c_{Z_1}q^{\dim(Z(Z_{i_1}))(i_1-1)}c_{Z_{i_1}Z_{i_2}}q^{\dim(Z(Z_{i_2}))(i_2-i_1-1)}\cdots c_{Z_{i_j}Z_k}q^{k-i_j-1}.$$

For $1 \leq u \leq j-1$, we have, $Z_{i_u} \sptnq Z_{i_{u+1}}$. Thus, $Z_{i_u} \sptnq Z_k$ for all $u:~1 \leq u\leq j$. Then by Lemma~\ref{L2}, we have $\dim(Z(Z_{i_u})) < \dim(Z(Z_k))$ for all $u:~1 \leq u \leq j$. Hence, for $1\leq u \leq j$, $$\dim(Z(Z_{i_u})) < m(n).$$ Therefore $$ \dim(Z(Z_{i_u})) \leq m(n) -1$$ for $1 \leq u \leq j$. Hence, $c_{Z_1}c_{Z_1Z_2}\cdots c_{Z_{k-1}Z_k}$ is bounded above by $$c_{Z_1}c_{Z_{i_1}Z_{i_2}}\cdots c_{Z_{i_j}Z_k}.q^{(m(n)-1)(i_j-j)}.q^{m(n)(k-i_j-1)},$$ which is bounded above by $$c_{Z_1}c_{Z_{i_1}Z_{i_2}}\cdots c_{Z_{i_j}Z_k}.q^{(m(n)-1)(i_j)}.q^{m(n)(k-i_j)}.$$ Now, as each of $c_{Z_1}, c_{Z_{i_1}Z_{i_2}}, \ldots, c_{Z_{i_j}Z_k}$ cannot be more than $q^{n^2}$, we have, \begin{equation*}\begin{aligned}c_{Z_1}c_{Z_1Z_2}\cdots c_{Z_{k-1}Z_k} &\leq q^{n^2(j+1)}.q^{[(m(n)-1)i_j+ m(n)(k-i_j)]} \\ &= q^{n^2(j+1)}.q^{(m(n)k-i_j)}
\end{aligned}
\end{equation*}
Here are some observations:
\begin{itemize}
\item We know that there are only a finite number of distinct algebras in $\Mnq$. Let that number be $f(n)$. For each $j$ as $0 \leq j \leq n^2-1$, there cannot be more than ${f(n) \choose j+1 }$ of them.
\item Given $Z_1 \sptq \cdots \sptq Z_k$, in which $j+1$ of them are distinct, i.e., there is a strongly decreasing subsequence of $Z_1 \sptq \cdots \sptq Z_k$: $$Z_{i_1} \sptnq Z_{i_2} \sptnq \cdots \sptnq Z_{i_j} \sptnq Z_k,$$  such that $Z_1 \sptq \cdots \sptq Z_k$, is as in Expression~\ref{Expn1}. Given this subset $S = \{i_1,\ldots, i_j\}$, at which the descents occur, $c_{Z_1}c_{Z_1Z_2}\cdots c_{Z_kZ_{k-1}}$ is bounded above by $$q^{n^2(j+1)}.q^{(m(n)k-\max(S))}.$$ But then this $S$ could be any size $j$ subset of $\{1,\ldots, k-1\}$. So, $c(n,k,q)$ is bounded above by $$\sum_{j=0}^{n^2-1}\left({f(n)\choose j+1}q^{n^2(j+1)}\sum_{\substack{S \sbteq \{1,\ldots,k-1\}\\|S| = j}}q^{(m(n)k-\max(S))}\right),$$
which is equal to $$\sum_{j=0}^{n^2-1}\left({f(n)\choose j+1}q^{n^2(j+1)}\sum_{r = j}^{k-1}\sum_{\substack{S \sbteq \{1,\ldots,k-1\}\\|S| = j\\ \Max(S) = r}}q^{(m(n)k-r)}\right).$$ But this is equal to $$\sum_{j=0}^{n^2-1}\left({f(n)\choose j+1}q^{n^2(j+1)}\sum_{r = j}^{k-1}{r-1 \choose j-1}q^{(m(n)k-r)}\right).$$ (Once $r$ is chosen, the remaining $j-1$ numbers are chosen from $1, \ldots, r-1$ in ${r-1 \choose j-1}$ ways.)\\

 Now, as ${r-1 \choose j-1} \leq r^j$, we get that \begin{equation*}\begin{aligned}c(n,k,q) &\leq q^{m(n)k}\sum_{j=0}^{n^2-1}\left({f(n)\choose j+1}q^{n^2(j+1)}\sum_{r = j}^{k-1}r^jq^{-r}\right)\\
&\leq q^{m(n)k}\sum_{j=0}^{n^2-1}\left({f(n)\choose j+1}q^{n^2(j+1)}\sum_{r = 0}^\infty r^jq^{-r}\right)
\end{aligned}
\end{equation*}

\end{itemize}

Now, for any fixed $j$, we can see by any of the routine tests (either the root or ratio test) that the series, $$\sum_{r = 0}^\infty r^jq^{-r},\text{ converges.}$$  So, let $$C_2 = \displaystyle\sum_{j=0}^{n^2-1}\left({f(n)\choose j+1}q^{n^2(j+1)}\displaystyle\sum_{r = 0}^\infty r^j q^{-r}\right),$$  then we have $$c(n,k,q) \leq s_2q^{m(n)k}.$$ So we have found positive constants $C_1$ and $C_2$ such that $$C_1q^{m(n)k} \leq c(n,k,q) \leq C_2 q^{m(n)k}$$ Hence $c(n,k,q)$, as a function of $k$ is asymptotically $q^{m(n)k}$ upto some constant factor.


\section{Asymptotic of Counting Tuples of Commuting Matrices}\label{S3}
In this section, instead of looking at simultaneous similarity classes of commuting tuples, we will look at the asymptotic of counting total number of tuples of commuting matrices. Let $C(n,k,q)$ denote the total number of $k$-tuples of commuting $n \times n$ matrices over $\F_q$ i.e., the size of $\Mnq^{(k)}$. Then we have,
\begin{equation}\label{E21}
C(n,k,q) = \sum_{Z \sbteq \Mnq} \frac{|\Gnq|}{|Z^*|}C_Z,
\end{equation}
 where $Z$ varies over conjugacy classes of subalgebras of $\Mnq$, $Z^*$ is the group of units of $Z$, and $C_Z$ is the total number of simultaneous similarity classes of $k$-tuples of commuting matrices whose common centralizer algebra is isomorphic to $Z$.\\

From the previous section, we see that $$C_Z = \sum_{\substack{Z_1 \sptq \cdots \sptq Z_k\\ Z_k = Z}} c_{Z_1}c_{Z_1Z_2}\cdots c_{Z_{k-1}Z_k},$$ where $Z_k = Z$. So we can rewrite equation~(\ref{E21}) as
\begin{equation}\label{E22}
C(n,k,q) = \sum_{Z_1 \sptq \cdots \sptq Z_k} \frac{|\Gnq|}{|Z^*_k|}c_{Z_1}c_{Z_1Z_2}\cdots c_{Z_{k-1}Z_k}.
\end{equation}
Equation~\ref{E22} is a modified version of equation~(\ref{Eqn2}).\\

Now, if we consider tuples, $(A_1,\ldots, A_k),$ whose first $l_0 ( = \left[\frac{n}{2}\right] +1)$ coordinates are as in examples~\ref{Eg1}~and~\ref{Eg2}, then we get $|Z_k| = q^{m(n)}$, and $|Z^*_k| = (q-1)q^{\left[\frac{n^2}{4}\right]}$. So we have $$\frac{|\Gnq|}{(q-1)q^{\left[\frac{n^2}{4}\right]}}c_{Z_1}c_{Z_1Z_2}\cdots c_{Z_{l_0}}q^{m(n)(k-l_0)} \leq C(n,k,q)$$ Thus, choose $$D_1 = \frac{|\Gnq|}{(q-1)q^{\left[\frac{n^2}{4}\right]}}c_{Z_1}c_{Z_1Z_2}\cdots c_{Z_{l_0}}q^{-m(n)l_0}.$$ Then we get $D_1q^{m(n)k} \leq C(n,k,q)$.\\

Now we can find an upper bound for $C(n,k,q)$. From equation~(\ref{E22}) we have $C(n,k,q)$ equal to $$\sum_{Z_1 \sptq \cdots \sptq Z_k} \frac{|\Gnq|}{|Z^*_k|}c_{Z_1}c_{Z_1Z_2}\cdots c_{Z_{k-1}Z_k.}$$ Now, as $\Gnq$ has only a finite number of subgroups, $\frac{|\Gnq|}{|Z^*_k|}$ is bounded above. Let that bound be $G(q)$. So we have \begin{equation*}\begin{aligned}C(n,k,q) &\leq G(q)\sum_{Z_1 \sptq \cdots \sptq Z_k}c_{Z_1}c_{Z_1Z_2}\cdots c_{Z_{k-1}Z_k}\\ &= G(q)c(n,k,q)\\ &\leq G(q)C_2q^{m(n)k} \text{ (From section~\ref{S2})}\end{aligned}
\end{equation*}
So let $D_2 =  G(q)C_2$, then we have $D_2 >0$ such that, $C(n,k,q) \leq D_2q^{m(n)k}$. This proves the theorem:
\begin{theorem}
The total number of $k$-tuples of commuting $n\times n$ matrices over $\F_q$: $C(n,k,q)$ is asymptotic to $q^{m(n)k}$ as a function of $k$.
\end{theorem}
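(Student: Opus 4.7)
The plan is to relate $C(n,k,q)$ to the similarity class count $c(n,k,q)$ via the orbit-stabilizer theorem, then apply Theorem~\ref{main}. For any commuting $k$-tuple $(A_1,\ldots,A_k)$, its $\Gnq$-orbit under simultaneous conjugation has size $|\Gnq|/|Z^*|$, where $Z = \bigcap_{j} Z_{\Mnq}(A_j)$ is the common centralizer algebra and $Z^*$ is its unit group. Summing over similarity classes and refining by the non-increasing chain of centralizers $Z_1 \sptq \cdots \sptq Z_k$ built coordinate by coordinate, exactly as was done for $c(n,k,q)$ in Section~\ref{S2}, one obtains the identity recorded in equation~(\ref{E22}).

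For the lower bound, I would isolate the single chain coming from Examples~\ref{Eg1}~and~\ref{Eg2}: take the first $l_0 = [n/2]+1$ coordinates as in those examples, so that $Z_{l_0}$ is a commutative subalgebra of dimension $m(n)$ and therefore, by Lemma~\ref{Obs}(1), $Z_{l_0+1} = \cdots = Z_k = Z_{l_0}$. Here $|Z_k| = q^{m(n)}$ and $|Z_k^*| = (q-1)q^{[n^2/4]}$, so the contribution of this one chain to $C(n,k,q)$ already has the form $D_1 q^{m(n)k}$ for a positive constant $D_1$ depending only on $n$ and $q$ (the relevant product $c_{Z_1}c_{Z_1Z_2}\cdots c_{Z_{l_0-1}Z_{l_0}}$ and the factor $|\Gnq|/|Z_k^*|$ are all independent of $k$).

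For the upper bound, the key observation is that the factor $|\Gnq|/|Z_k^*|$ appearing in equation~(\ref{E22}) is uniformly bounded above in $k$ by some constant $G(q)$: since $Z_k$ ranges over subalgebras of $\Mnq$, of which there are only finitely many, the quantity $|Z_k^*|$ can take only finitely many values, and in particular has a positive lower bound (for instance $q-1$, from the scalars). Therefore $C(n,k,q) \leq G(q)\cdot c(n,k,q)$, and invoking the upper bound from Theorem~\ref{main} yields $C(n,k,q) \leq G(q) C_2 q^{m(n)k}$, completing the asymptotic.

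I do not expect a serious obstacle here: all the substantive combinatorics (handling the chains of centralizers and proving $c(n,k,q) \leq C_2 q^{m(n)k}$) is already done in Section~\ref{S2}, and what remains is essentially bookkeeping. The only point that needs a moment of care is verifying that the orbit-size factor $|\Gnq|/|Z_k^*|$ is bounded above and below by positive constants in $k$, which follows at once from the finiteness of the set of subalgebras of $\Mnq$.
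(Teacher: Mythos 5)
Your proposal is correct and follows essentially the same route as the paper: the same orbit-stabilizer refinement giving equation~(\ref{E22}), the same lower bound from the single chain of Examples~\ref{Eg1}~and~\ref{Eg2} with $|Z_k^*| = (q-1)q^{[n^2/4]}$, and the same upper bound via a uniform bound $G(q)$ on $|\Gnq|/|Z_k^*|$ combined with $c(n,k,q) \leq C_2 q^{m(n)k}$ from Section~\ref{S2}. The only cosmetic difference is that you justify the bound on $|\Gnq|/|Z_k^*|$ by the finiteness of the set of subalgebras (with the scalars giving $|Z_k^*| \geq q-1$), while the paper cites the finiteness of the set of subgroups of $\Gnq$; both are adequate.
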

Keeping $n$ and $q$ fixed, we could find the asymptotics of $c(n,k,q)$ and $C(n,k,q)$ as $k$ goes to $\infty$. We could instead keep $k$ and $q$ fixed and ask what are the asymptotics of $c(n,k,q)$ and $C(n,k,q)$ as $n$ goes to $\infty$. We could also keep $k$ and $n$ fixed and ask for the asymptotics of $c(n,k,q)$ and $C(n,k,q)$ as a function of $q$.
\subsection*{Acknowledgements} I thank my supervisor Prof.~Amritanshu~Prasad for the discussions we had about this topic and for feedback on the draft of this paper, and  Prof.~S.~Viswanath for a few suggestions while giving a talk on the results of this paper.

\bibliographystyle{alpha}
\bibliography{References}

\begin{thebibliography}{Ago14}

\bibitem[Ago14]{ALAg}
A.~L. Agore.
\newblock Parabolic subalgebras of matrix algebras.
\newblock eprint \url{http://arxiv.org/pdf/1403.0773.pdf}, 2014.

\bibitem[Gir98]{Girth}
Frank Girth.
\newblock Some asymptotics for triangular matrices over finite fields.
\newblock {\em Linear Algebra and its Applications}, 282:249--261, 1998.

\bibitem[Jac44]{NJac}
N.~Jacobson.
\newblock Schur's theorems on commutative matrices.
\newblock {\em Bull. Amer. Math. Soc.}, 1944.

\bibitem[NP95]{NP1}
P.M. Neumann and C.E. Praeger.
\newblock Cyclic matrices over finite fields.
\newblock {\em J. London Math. Soc.}, 52(2):263--284, 1995.

\bibitem[Sha16]{UBS}
Uday~Bhaskar Sharma.
\newblock Simultaneous similairty classes of commuting matrices over a finite
  field.
\newblock {\em Linear Algebra Appl.}, 501:48--97, 2016.

\bibitem[Sto88]{Stong}
Richard Stong.
\newblock Some asymptotic results on finite vector spaces.
\newblock {\em Adv. App. Math}, 9:167--199, 1988.

\end{thebibliography}
\end{document}